\documentclass[a4paper,10pt]{article}

\usepackage[francais,english]{babel}
\usepackage[latin1]{inputenc}   
\usepackage{amsmath,amsthm}
\usepackage[dvips]{graphicx}
\usepackage{amsfonts,amssymb}
\usepackage{geometry}
\geometry{a4paper,hmargin=3.1cm, vmargin=3.5cm }
\usepackage{hyperref}
\usepackage{stmaryrd}
\usepackage{fancyhdr}
\usepackage{url}
\usepackage{dsfont}

\newtheorem*{conj*}{Conjecture}

\newtheorem*{thm*}{Theorem}

\newtheorem{prop}{Proposition}[section]

\newtheorem{LM}{Lemma}[section]

\newtheorem{thm}{Theorem}[section]
\newtheorem{df}{Definition}[section]
\newtheorem{cor}{Corollary}[section]

\newtheoremstyle{pourlesremarques}{\topsep}{\topsep}{\normalfont}{}{\bfseries}{.}{ }{}
\theoremstyle{pourlesremarques}

\newtheorem*{rem*}{Remark}
\newtheoremstyle{pourlesexemples}{\topsep}{\topsep}{\normalfont}{}{\bfseries}{.}{ }{}
\theoremstyle{pourlesexemples}

\renewcommand{\o}{\mathfrak{O}}

\newcommand{\w}{\varpi}
\renewcommand{\d}{\delta}
\newcommand{\R}{\mathbb{R}}
\renewcommand{\l}{\lambda}
\newcommand{\C}{\mathbb{C}}
\newcommand{\Q}{\mathbb{Q}}

\newcommand{\M}{\mathcal{M}}
\newcommand{\Z}{\mathbb{Z}}

\newcommand{\0}{\mathbf{0}}

\newcommand{\D}{\Delta}

\title {\textbf{Shalika periods and parabolic induction for GL(n) over a non archimedean local field}}
\author{Nadir MATRINGE\footnote{Nadir Matringe, Universit\'e de Poitiers, Laboratoire de Math\'ematiques et Applications,
T\'el\'eport 2 - BP 30179, Boulevard Marie et Pierre Curie, 86962, Futuroscope Chasseneuil Cedex. Email: Nadir.Matringe@math.univ-poitiers.fr}}

\begin{document}
\maketitle

\begin{abstract}
Let $F$ be a non archimedean local field, and $n_1$ and $n_2$ two positive even integers. We prove that if $\pi_1$ and $\pi_2$ are two smooth 
representations of $GL(n_1,F)$ and $GL(n_2,F)$ respectively, both admitting a Shalika functional, then the 
normalised parabolically induced representation $\pi_1\times \pi_2$ also admits a Shalika functional. Combining this with 
the results of \cite{M-localBF}, we obtain as a corollary the classification of generic representations of $GL(n,F)$ admitting a Shalika functional 
when $F$ has characteristic zero. This result is relevant to the study of the Jacquet-Shalika exterior square $L$ factor.
\end{abstract}

\section{Introduction}

Let $F$ be a non archimedean local field, let $n$ be an even positive integer, and $\pi$ be a smooth irreducible representation of 
$GL(n,F)$ on a complex vector space. Jacquet and Shalika 
introduced in \cite{JS} an integral representation of the exterior square $L$ factor of $\pi$ (though the work \cite{JS} is more concerned 
with the global exterior square $L$ function). A more detailed study of the exterior square $L$ factor of $\pi$ was started by Cogdell and 
Piatetski-Shapiro (see \cite{CP}), and recently continued in \cite{K}, \cite{B12}, \cite{MY13}, \cite{KR}, \cite{M-linearmirabolic} and \cite{CM}. 
The existence of a Shalika functional on $\pi$ (see Definition \ref{shalika-period}) 
is related with the occurence of a certain type of pole at $0$ of the exterior square $L$ factor of $\pi$ 
(see \cite{K} when $\pi$ is square integrable). In particular, according to the Langlands functoriality conjectures, it is expected that $\pi$ will have a Shalika functional precisely when it is 
a functorial lift from $SO(2n+1,F)$ (see for example \cite{JNQ} for the cuspidal case). Another way to say this is that if $\pi$ has Langlands parameter $\phi$ (which is a semi-simple
representation of the Weil-Deligne group $W'_F$ of $F$ on a complex vector-space $V$ of dimension $n$), then $\pi$ should admit a Shalika functional if and only if the image of $\phi$ fixes a non-degenerate alternating form $B$ on $V \times V$. But then, 
if $\phi_1$ and $\phi_2$ are two finite dimensional semi-simple
representations of $W'_F$, the image of which respectively fix two non-degenerate alternating forms $B_1$ and $B_2$, then the image of $\phi_1\oplus \phi_2$ fixes the non-degenerate alternating form $B_1\oplus B_2$. As the
direct sum of Langlands parameters corresponds to parabolic induction, at least in the case where parabolic induction preserves irreducibility, it is natural to expect that the property
"to admit a Shalika functional" should be stable under parabolic induction (notice that a similar property has been established in \cite{Kap16} 
in the context of the symmetric square $L$ factor). The main goal of this note 
is to prove this fact.

\begin{thm}\label{main}
Let $n_1$ and $n_2$ be two even integers, and $\pi_1$ and $\pi_2$ be two smooth 
representations of $GL(n_1,F)$ and $GL(n_2,F)$ respectively, both admitting a Shalika functional, then the 
normalised parabolically induced representation $\pi_1\times \pi_2$ of $GL(n_1+n_2,F)$ also admits a Shalika functional.
\end{thm}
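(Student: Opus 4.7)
The plan is a direct open-orbit construction. Let me set $G=GL(n_1+n_2,F)$, let $P=MN$ be the standard parabolic with Levi $M=GL(n_1,F)\times GL(n_2,F)$, and let $S\subset G$ denote the Shalika subgroup with Shalika character $\chi$. Writing $m_i=n_i/2$, let $S_i\subset GL(n_i,F)$ be the Shalika subgroup with character $\chi_i$ and let $\Lambda_i$ be a Shalika functional on $\pi_i$. The first step is to exhibit a permutation matrix $w\in G$ rearranging the block partition $(m_1,m_1,m_2,m_2)$ into $(m_1,m_2,m_1,m_2)$, i.e.\ swapping the two middle blocks. A direct computation shows that $w(S_1\times S_2)w^{-1}\subset S$, and that the restriction of $\chi$ to $w(S_1\times S_2)w^{-1}$, pulled back via conjugation by $w$, coincides with $\chi_1\boxtimes\chi_2$; crucially the modular-character contributions match because $n_1$ and $n_2$ are both even.

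Realising $\pi_1\times\pi_2$ as functions $f:G\to V_{\pi_1}\otimes V_{\pi_2}$ satisfying the usual normalised transformation rule $f(pg)=\delta_P^{1/2}(p)(\pi_1\otimes\pi_2)(p)f(g)$, the candidate Shalika functional is
\[
\Lambda(f)=\int_{(S\cap w^{-1}Pw)\backslash S}(\Lambda_1\otimes\Lambda_2)\bigl(f(ws)\bigr)\chi^{-1}(s)\,ds.
\]
The construction of $w$ in the first step ensures that the integrand descends to a function on the quotient, and the $(S,\chi)$-equivariance of $\Lambda$ is then formal from right-invariance of the measure. Non-vanishing would be obtained by taking $f$ supported in a small compact neighborhood of $w$ on which $f(w\,\cdot)$ approximates a pure tensor $v_1\otimes v_2$ with $\Lambda_i(v_i)\neq 0$; the integral then reduces to a nonzero multiple of $\Lambda_1(v_1)\Lambda_2(v_2)$.

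The \emph{main obstacle} is the convergence of this integral for a sufficiently rich family of sections $f$. Convergence is elementary on the dense subspace of sections supported in the open double coset $PwS$, where the integration range becomes compact. To extend to the full induced representation, my plan is to invoke the Bernstein--Zelevinsky geometric lemma, which furnishes an $S$-stable filtration of $\pi_1\times\pi_2$ by subquotients indexed by $P\backslash G/S$, and to verify that only the piece associated to $w$ can carry a nonzero $(S,\chi)$-equivariant functional (the other subquotients being excluded by a character-matching calculation on the corresponding stabilizers). The real technical work is therefore the combinatorial description of $P\backslash G/S$ together with the case-by-case elimination of the non-open strata; once these are under control the Shalika functional on $\pi_1\times\pi_2$ drops out of the integral above.
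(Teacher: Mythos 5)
Your candidate functional is exactly the one the paper constructs: after transporting everything by the Weyl element $w$, the integral over $(S\cap w^{-1}Pw)\backslash S$ is the composite of the paper's inner integral over $V_0^-\cong (Q\cap N)\backslash N$ and its outer integral over $P'\backslash G'$, and your equivariance and non-vanishing arguments (a section supported on an open-cell neighbourhood of $w$ reducing the integral to a nonzero multiple of $\Lambda_1(v_1)\Lambda_2(v_2)$) also match the paper's. The gap sits precisely where you locate the ``main obstacle''. Your plan --- prove convergence for sections supported on the open double coset $PwS$ and then use the Bernstein--Zelevinsky filtration of $(\pi_1\times\pi_2)|_S$ to eliminate the other strata --- does not yield the theorem. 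If $J_0$ denotes the (sub)representation of sections supported on the open cell, then vanishing of $\mathrm{Hom}_S(-,\Theta)$ on the non-open subquotients only makes the restriction map $\mathrm{Hom}_S(J,\Theta)\to \mathrm{Hom}_S(J_0,\Theta)$ injective; to go the other way, i.e.\ to extend a functional from $J_0$ to all of $J$, you would need vanishing of $\mathrm{Ext}^1_S$ of the complementary strata against $\Theta$, since $\mathrm{Hom}_S(-,\Theta)$ is only left exact and $S$ is not compact. Moreover, the theorem is stated for arbitrary smooth $\pi_1,\pi_2$ (no irreducibility or admissibility), so the ``character-matching'' elimination of the other orbits, which would have to be run on twisted Jacquet modules of $\pi_1\otimes\pi_2$, is not actually available.

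What makes the integral converge on \emph{all} of $J$ is a support property specific to Shalika functionals, due to Friedberg and Jacquet (Proposition \ref{shalika-vanish}): for a Shalika functional $L$ on a representation $\rho$ of $G_{2m}$ and any vector $v$, the coefficient $g\mapsto L(\rho(\mathrm{diag}(g,1))v)$ is supported in the intersection of a compact subset of $\M_m$ with $G_m$. The paper's key step (Lemma \ref{key}) shows, by decomposing the maximal-rank matrices of $\M_{m_2,m_1}$ into Cartan cells $K_{m_2}u(\w^{a_1},\dots,\w^{a_{m_2}})K_{m_1}$ and running an induction on $r\leq m_2$ with explicit Bruhat-type matrix identities, that this property forces $x\mapsto \l(f)(v_0^-(x))$ to vanish off a fixed compact set for every $f$ in the induced representation, not merely for those supported on the open cell. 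This is the technical heart of the proof and is absent from your proposal; without it (or a genuine Ext-vanishing argument in its place) the passage from the open-cell subspace to the whole induced representation is unjustified.
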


As a corollary, when $F$ is of characteristic zero, using the results of \cite{M-localBF}, we obtain the following consequence 
(the ``symmetric square'' analogue being the main result of 
\cite{Kap15}), which is stated in terms of the Zelevinsky classification 
of generic representations (see \cite{Z}, Theorem 9.7).

\begin{cor}\label{cor}
Let $n$ be a positive even integer, and $\pi$ be a generic representation of $GL(n, F )$.
The representation $\pi$ admits a Shalika functional if and only if it is obtained as a normalised
parabolic induction
\[(\D_1\times \D_1^\vee)\times \dots \times (\D_s\times \D_s^\vee)\times \D_{s+1}\times \dots \times \D_t , \ (0 \leq s \leq  t)\]
where each $\D_i$ is a discrete series, which moreover admits a Shalika functional for $i>s$. In
particular $\pi$ admits a linear period if and only if it admits a Shalika functional.                                                                                                                          
\end{cor}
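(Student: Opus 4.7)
The plan is to derive the corollary by combining Theorem \ref{main} with the results of \cite{M-localBF}, which provides a classification of generic representations admitting a linear period and (for discrete series, or more generally generic representations) identifies linear and Shalika periods via a residue argument on the Friedberg--Jacquet zeta integral. The argument splits into the two directions of the equivalence, after which the final comparison with linear periods is formal.

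For the ``if'' direction, suppose $\pi$ has the stated form. I would first observe that each building block $\D_i \times \D_i^\vee$ is a representation of $GL(2\deg \D_i, F)$ (even degree automatic) admitting a Shalika functional: this follows by noting that $\D_i \times \D_i^\vee$ admits a linear period (a Jacquet--Rallis type statement for a discrete series paired with its contragredient) and transferring to a Shalika functional via the linear-to-Shalika residue argument in \cite{M-localBF}. For $i > s$, each $\D_i$ admits a Shalika functional by hypothesis, which forces $\deg \D_i$ to be even. An iterated application of Theorem \ref{main}, legitimate because every factor glued has even degree, then yields a Shalika functional on the full induced representation.

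For the ``only if'' direction, I would start from a generic $\pi$ admitting a Shalika functional and invoke \cite{M-localBF} to deduce that $\pi$ admits a linear period (extracting it as a residue of the Friedberg--Jacquet zeta integral at the pole forced by the Shalika functional). The classification of generic representations with a linear period from the same reference then produces the factorization in the corollary, with the unpaired $\D_i$ (for $i > s$) themselves admitting a linear period, hence a Shalika functional by the equivalence in \cite{M-localBF}.

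The final sentence of the corollary is then automatic: representations of the classified form admit both a linear and a Shalika period by what has just been shown, and conversely any generic representation with either type of period takes that form. The main obstacle is not any single computation but rather organizing the citations from \cite{M-localBF}: one has to make sure that the input statements (existence of a linear period on $\D_i \times \D_i^\vee$, linear-to-Shalika transfer, classification of linear-period generic representations) are available in a shape compatible with the inductive use of Theorem \ref{main}, and that the combinatorial form produced by the classification matches literally the ``$s$ contragredient pairs plus $t-s$ Shalika-distinguished singletons'' presentation in the corollary.
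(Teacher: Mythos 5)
Your overall architecture is the same as the paper's: for the ``only if'' direction, Shalika implies linear (this is Proposition \ref{shalika-implies-linear}, an integral construction from \cite{FJ} extended to finite length representations by \cite{JR} --- not from \cite{M-localBF}, but that is only a misattribution), then the classification of linearly distinguished generic representations (Theorem \ref{linear-generic}) combined with the linear/Shalika equivalence for discrete series (Proposition \ref{equiv}); for the ``if'' direction, a Shalika functional on each inducing block followed by iterated application of Theorem \ref{main}.

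The genuine gap is in your treatment of the blocks $\D_i\times\D_i^\vee$. You propose to first produce a linear period on $\D_i\times\D_i^\vee$ and then ``transfer'' it to a Shalika functional via a ``linear-to-Shalika residue argument in \cite{M-localBF}''. No such transfer exists in the cited sources for this representation: the only linear-to-Shalika implication available (Theorem 5.1 of \cite{M-linearmirabolic}, recalled here as Proposition \ref{equiv}) is proved for \emph{discrete series} only, and $\D_i\times\D_i^\vee$ is not a discrete series. Moreover, ``linear implies Shalika'' for general generic representations is precisely part of what the corollary asserts (its last sentence), so invoking it for the generic representation $\D_i\times\D_i^\vee$ is assuming a special case of the conclusion; and if such a transfer were known for all generic representations, Theorem \ref{main} would be superfluous, since Theorem \ref{linear-generic} together with Proposition \ref{shalika-implies-linear} would already yield the corollary. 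The paper instead uses Proposition \ref{sym} (Proposition 3.8 of \cite{M-localBF}), which constructs a Shalika functional on $\D\nu^s\times\D^\vee\nu^{-s}$ \emph{directly} from the canonical pairing $v\otimes v^\vee\mapsto v^\vee(v)$ on the inducing data, by an integral convergent for $\mathrm{Re}(s)$ large and then continued in $s$; the present paper even devotes its proof of Proposition \ref{sym} to repairing a boundedness claim in that construction, which shows this step is not a formal consequence of the existence of a linear period. With Proposition \ref{sym} substituted for your transfer step, the rest of your argument goes through exactly as in the paper.
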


This corollary is also a consequence of the results, obtained by a different method, of the forthcoming work \cite{G}, 
which relates linear and Shalika functionals via the Theta correspondance for the pair $(GL(n,F),GL(n,F))$. It is quite 
clear, in light of 
the works \cite{M-asai} and \cite{M-localBF}, that this result should play a role in establishing the inductivity 
relation of 
the exterior square $L$ factor.

{\ack{ I thank the referee for his careful reading and his many helpful suggestions, the paper in his actual form owes him much. This work was supported by the research project ANR-13-BS01-0012 FERPLAY.}}

\section{Preliminaries}

We let $F$ be a non archimedean local field, $\o$ be its ring of integers, $\w$ a uniformiser, $q$ the residual cardinality 
$|\o/\w \o|$, and 
$\mid.\mid$ the absolute value on $F$, normalised by the condition $|\w|=q^{-1}$. 
We set $\M_{p,q}$ the matrix algebra $\M(p,q,F)$, we will denote $\M_{p,p}$ by $\M_p$. We will write 
$\0_{p,q}$ (or $\0_p$ if $p=q$) the zero 
matrix of $\M_{p,q}$. We denote by 
$\M_{p,q}^o$ the Zariski open subset of matrices of maximal rank $\mathrm{min}(p,q)$ in $\M_{p,q}$, it is the $F$-points 
of a Zariski open set.  We will denote 
by $G_p$, the general linear group of invertible elements in $\M_p$. We will denote by $K_p$ the group $G_p(\o)$, 
and for $l\geq 1$, by $K_p(l)$ its congruence subgroup $I_p+\w^l\M_p(\o)$.\\

Let $n_1=2m_1$ and $n_2=2m_2$ be two positive \textit{even} integers, with $n_1\geq n_2$, 
let $n=n_1+n_2$, and $m=m_1+m_2=n/2$.
We denote by $P=MU$ the standard (i.e. containing the Borel subgroup of upper triangular matrices) 
parabolic subgroup of $G=G_n$ of type $(n_1,n_2)$, with unipotent radical $U$, and Levi subgroup $M$ given by 
the appropriate block diagonal matrices. 
Let $\pi_i$ be an admissible representation of $G_{n_i}$. We denote by $G'$ the subgroup of $G$ 
of matrices $diag(g,g)$ with $g$ in $G_m$, and by $P'$ the subgroup of $G'$ 
of matrices $diag(p,p)$ with $p$ in the standard parabolic subroup of $G_m$ of type $(m_1,m_2)$ containing 
the Borel subgroup 
of upper triangular matrices. We denote by $N$ the unipotent radical of the standard parabolic subroup 
of $G_m$ of type $(m,m)$, 
so that 
$S=NG'$ is what is known as the Shalika subgroup of $G$. We fix a nontrivial character $\theta$ of $(F,+)$: 
this defines 
a character $\Theta$ of $S$, given by the formula $\Theta(ng')= \theta(Tr(x))$, where 
$n=\begin{pmatrix} I_m & x \\ & I_m \end{pmatrix}$.

\begin{df}\label{shalika-period}
If $\pi$ is a smooth representation of $G$, one says that $\pi$ has a Shalika functional if 
$\mathrm{Hom}_S(\pi,\Theta)\neq 0$, in which case we call a Shalika functional on $\pi$ an element
of $\mathrm{Hom}_S(\pi,\Theta)-\{0\}$.
\end{df}

Let $w$ be the Weyl element of $G$, which is the permutation matrix corresponding to the permutation:
\begin{itemize}
 \item $i\mapsto i$ if $1\leq i\leq m_1$.
\item $i\mapsto i+m_2$ if $m_1<i\leq 2m_1$.
\item $i\mapsto i-m_1$ if $2m_1<i\leq 2m_1+m_2$.
\item $i\mapsto i$ if $2m_1+m_2<i\leq n=2m_1+2m_2$.
\end{itemize}

We denote by $Q$ the group $P^w=w^{-1}Pw$, by $L$ the group $M^w$, by $V$ the group $U^w$, and by $V^-$ the 
image of $V$ 
under transpose. We denote by $V_0^-$ the group $V^-\cap N$. The elements of $V^-$ are of the form 
$$\begin{pmatrix} I_{m_1} &  &  & \\ a & I_{m_2} & b & \\ &  & I_{m_1}& \\ c & & d & I_{m_2} \end{pmatrix},$$ with 
$a, b, c, d\in \M_{m_2,m_1}$, and those of $V_0^-$ are of the form 
$$v_0^-(x)=\begin{pmatrix} I_{m_1} &  &  & \\ & I_{m_2} & x & \\ &  & I_{m_1}& \\ & & & I_{m_2} \end{pmatrix},$$ with 
$x\in \M_{m_2,m_1}$. 
The elements of $Q\cap N$ are those of the form 
$$\begin{pmatrix} I_{m_1} &  & u & v \\ & I_{m_2} &  & w\\ &  & I_{m_1}& \\ & & & I_{m_2} \end{pmatrix},$$ for
$u\in \M_{m_1}$, $v \in \M_{m_1,m_2}$, and $w\in \M_{m_2}$. In particular one has 
$N=V_0^- (Q\cap N)$, and the
elements in $V_0^-$ commute with those in $\Q\cap N$. Let $\pi_i$ be 
a smooth representation of $G_{n_i}$, with a nonzero Shalika functional $\l_i$ on its space. 
Let $\sigma$ be the inflation of 
$\pi_1\otimes \pi_2$ to $P$, and 
\[\l=\l_1\otimes \l_2:v_1\otimes v_2 \mapsto \l_1(v_1)\l_2(v_2)\]
the associated linear form on the space of $\sigma$. 
Let $\tau=\sigma^w$ the corresponding representation of $Q$, acting on the same space as $\sigma$. 
We denote by $I$ the representation $\mathrm{Ind}_P^G(\sigma)$, and by $J$ the representation $\mathrm{Ind}_Q^G(\tau)$, 
where the parabolic induction is normalised. Clearly $J$ and $I$ are isomorphic $G$-modules.

\section{Proof of the Theorem \ref{main}}

 For $f$ in $J$, 
we denote by $\l(f)$ the map $\l\circ f$. Let $\nu=|.|\circ det$, and $\d_{P'}$ be the character 
of $P'$ defined by 
$$\d_{P'}(p')=\nu(a)^{m_2}\nu(c)^{-m_1}$$ for 
$$p'= diag \left(\begin{pmatrix} a & b \\ & c \end{pmatrix},\begin{pmatrix} a & b \\ & c \end{pmatrix}\right).$$
Note that if we denote by $\d_P$ the modulus character of $P$, then 
$\d_{P'}(p')=\d_P^{1/2}(p')$ for all $p'$ in $P'$.
We observe that $\l(f)$ satisfies the three following relations:
\begin{LM}
for $v_0^-$ in $V_0^-$, $n$ in $Q\cap N$, $l'$ in 
$L'=L\cap G'$, and $v'$ in $V'=V\cap G'$, the following equalities hold.
\begin{equation}\label{1}\l(\rho(n)f)(v_0^-)=\l(f(v_0^-n))=\l(f(nv_0^-))=\l(\tau(n)f(v_0^-))=\Theta(n)\l(f)(v_0^-)\end{equation} 
\begin{equation}\label{2}\l(\rho(l')f)(v_0^-)=\l(f(v_0^- l'))=\l(f(l'(v_0^-)^{l'}))=\d_{P'}(l')\l(f)((v_0^-)^{l'})\end{equation}
\begin{equation}\label{3}\l(\rho(v')f)(v_0^-)=\l(f)(v_0^-)\end{equation} 
\end{LM}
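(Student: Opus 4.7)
My plan is to prove the three identities by a common three-step strategy: first unfold the definition of $\rho$ to rewrite the left hand side as $\lambda(f(v_0^- y))$ for the appropriate group element $y$; next move $v_0^-$ past $y$ and use the defining transformation rule of $f \in J$ to extract a factor $\delta_Q^{1/2}(q) \tau(q)$ for some $q \in Q$; finally I would compute $\lambda \circ \tau(q) = \lambda \circ \sigma(wqw^{-1})$ by unwinding the block structure and invoking the Shalika invariance of $\lambda = \lambda_1 \otimes \lambda_2$. The key recurring observation will be that conjugation by the Weyl element $w$ carries the subgroups $Q \cap N$, $L'$ and $V'$ of $Q$ into subgroups of $M = G_{n_1} \times G_{n_2}$ each of whose two factors again lies in a Shalika subgroup of $G_{n_i}$.

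For (1), I would use the fact that $V_0^-$ and $Q \cap N$ commute to write $\lambda(f(v_0^- n)) = \lambda(f(n v_0^-))$, and the $f$-rule then gives $\lambda(\tau(n) f(v_0^-))$ with no modulus correction since $n$ is unipotent. A direct block computation should show that $wnw^{-1}$ has Levi part $\mathrm{diag}(A, D)$ where $A$ and $D$ are the images of the two square diagonal blocks of $n$ inside the Shalika unipotents $N_1 \subset G_{n_1}$ and $N_2 \subset G_{n_2}$. Applying the Shalika property of each $\lambda_i$ will produce $\Theta_1(A)\Theta_2(D)\lambda(f(v_0^-))$, which I can identify with $\Theta(n)\lambda(f(v_0^-))$ after recognising $Tr(x)$ as the sum of the traces of the two diagonal blocks of $x$.

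For (2), the conjugation computation gives $v_0^-(x) l' = l' \cdot v_0^-(g_2^{-1} x g_1)$, still in $V_0^-$, and pulling out $\delta_Q^{1/2}(l') \tau(l')$ exploits $l' \in L \subset Q$. This time $wl'w^{-1}$ will have Levi $\mathrm{diag}(\mathrm{diag}(g_1, g_1), \mathrm{diag}(g_2, g_2))$ with each factor in the Shalika Levi subgroup $G'_i$, so $\lambda \circ \tau(l') = \lambda$ by Shalika invariance, and the surviving modulus factor will coincide with $\delta_{P'}(l')$ by the remark preceding the lemma.

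The most delicate step will be (3). I plan to write $v_0^-(x) v' = q \cdot v_0^-(x)$ with $q = v_0^-(x) v' v_0^-(-x)$; using the explicit form $v' = \mathrm{diag}(u, u)$ with $u = \begin{pmatrix} I & y \\ 0 & I \end{pmatrix}$, a direct matrix computation should reveal that $q$ is unipotent in $Q$ and that the Levi part of $wqw^{-1}$ is $\mathrm{diag}(A, D)$ with $A = \begin{pmatrix} I & -yx \\ 0 & I \end{pmatrix} \in N_1$ and $D = \begin{pmatrix} I & xy \\ 0 & I \end{pmatrix} \in N_2$. The Shalika property will then produce $\theta(Tr(-yx) + Tr(xy))$, and the main obstacle is precisely the cancellation $Tr(xy) = Tr(yx)$ that makes this equal to $1$: this is the algebraic shadow of the fact that $v'$ sits in the \emph{diagonal} subgroup $G'$.
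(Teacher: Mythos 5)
Your plan is correct and follows essentially the same route as the paper: (1) via the observation that $Q\cap N$ sits inside $\mathrm{diag}(N_1,N_2)^w\cdot V$ so that $\l\circ\tau(n)=\Theta(n)\l$, (2) via the $\tau(L')$-invariance of $\l$ together with the identification $\d_{P'}=\d_P^{1/2}|_{P'}$ coming from normalised induction, and (3) via the same conjugation $v_0^-v'=v'\,n\,v_0^-$ with $n\in Q\cap N$ and the cancellation $\Theta(n)=\theta(\mathrm{Tr}(-yx+xy))=1$. The only cosmetic difference is that you absorb $v'$ into a single element $q=v'n$ of $Q$ before applying the transformation rule, whereas the paper strips off $v'$ first; the computation is identical.
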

\begin{proof} Call $N_1$ and $N_2$ the unipotent radicals of the Shalika subgroups of 
$G_{n_1}$ and $G_{n_2}$ respectively, then $Q\cap N$ is a subgroup of $diag(N_1,N_2)^w.V$, hence for any $n\in Q\cap N$, 
and any $v$ in $\pi_1\otimes \pi_2$, one has $\l(\tau(n)v)=\Theta(n)\l(v)$, and this gives Relation (\ref{1}). 
Relation $(\ref{2})$ comes from the $\tau(L')$-invariance of $\l$, and the fact that we used normalised 
parabolic induction to define $J$. For Relation (\ref{3}), we write 
$$v_0^-=\begin{pmatrix} I_{m_1} & & & \\ &I_{m_2}& x & \\ & &I_{m_1}& \\ & & &I_{m_2} \end{pmatrix},\ v'=\begin{pmatrix} I_{m_1} & y& & \\ &I_{m_2}&  & \\ & &I_{m_1}& y\\ & & &I_{m_2} \end{pmatrix},$$ then 
$${v'}^{-1}v_0v'=\begin{pmatrix} I_{m_1} & &-yx & -yxy\\ &I_{m_2}& x & xy\\ & &I_{m_1}& \\ & & &I_{m_2} \end{pmatrix}=nv_0,$$ where 
the matrix $n$ is equal to $\begin{pmatrix} I_{m_1} & &-yx & -yxy\\ &I_{m_2}&   & xy\\ & &I_{m_1}& \\ & & &I_{m_2} \end{pmatrix}$. 
We notice that $n$ 
belongs to $Q\cap N$, and $$\Theta(n)=\theta(Tr(-yx+xy))=1,$$ so we deduce that 
$$\l(f(v_0^-v'))=\l(f(v'{v'}^{-1}v_0^-v'))=\l(f({v'}^{-1}v_0^-v'))= 
\l(f(nv_0^{-}))=\Theta(n)\l(f(v_0^{-}))=\l(f(v_0^{-})).$$
\end{proof}

The following lemma will be needed. As its proof is the most technical part of the paper, we postpone it to the 
next section, and admit it for the moment. 

\begin{LM}\label{key}
There is a compact subset $C$ of $\M_{m_2,m_1}$, such that for any $f\in J$, the map 
$$x\mapsto  \l(f)(v_0^-(x))$$ restricted to $\M_{m_2,m_1}^o$
 vanishes outside $\M_{m_2,m_1}^o\cap C$. In particular, 
the integral $$\phi(f)=\int_{x\in\M_{m_2,m_1}} \l(f)(v_0^-(x))dx$$ converges absolutely.
\end{LM}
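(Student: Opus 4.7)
Fix $f\in J$. Smoothness of $f$ gives a compact open subgroup $K\subset K_n$ under which $f$ is right-invariant, so $\{f(k):k\in K_n\}$ is a finite subset $\{v_1,\dots,v_r\}\subset \sigma$. Using the Iwasawa decomposition $G=QK_n$, I would write $v_0^-(x)=q(x)k(x)$ with $q(x)\in Q$ and $k(x)\in K_n$, so that
\[\l(f)(v_0^-(x))=\l(\tau(q(x))\,f(k(x))).\]
It then suffices to show that $\l(\tau(q(x))v_j)=0$ for each $j$ whenever $x\in \M_{m_2,m_1}^o$ lies outside a bounded set.

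For $x$ of maximal rank I would compute $q(x)$ explicitly using the Smith normal form $x=c_0 D a_0^{-1}$, with $c_0\in K_{m_2}$, $a_0\in K_{m_1}$ and $D=(\mathrm{diag}(\w^{e_1},\dots,\w^{e_{m_2}})\mid 0)$. The Iwasawa factorization takes the form $q(x)=\ell(x)\,q_c(x)$ where $q_c(x)$ stays in a bounded subset of $Q$, and $\ell(x)\in L$ has $w$-conjugate $w\ell(x)w^{-1}=\mathrm{diag}(a(x),d(x))\in M=G_{n_1}\times G_{n_2}$ with $GL$-block determinants scaling as a power of the dominant Smith invariants of $x$. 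Combining the normalization factor $\d_P^{1/2}$ with $\l=\l_1\otimes\l_2$, the quantity $\l(\tau(\ell(x))(v_j^1\otimes v_j^2))$ becomes
\[\d_P^{1/2}(\mathrm{diag}(a(x),d(x)))\,\l_1(\pi_1(a(x))v_j^1)\,\l_2(\pi_2(d(x))v_j^2),\]
and the Shalika equivariance of each $\l_i$ absorbs the components of $a(x),d(x)$ that lie in the Shalika subgroups of $G_{n_i}$ into character factors, leaving purely ``Cartan'' translates of the fixed vectors $v_j^i$.

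The main obstacle is establishing the vanishing of this expression for $\|x\|$ large. The plan is to combine the decay provided by $\d_P^{1/2}$ with the fact that Shalika functionals, being analogous to Whittaker functionals, vanish outside a compact region of the Cartan direction when evaluated on a fixed smooth vector (a consequence of the smoothness of $\pi_i$ and of $\l_i$ factoring through a suitable Jacquet-type quotient). Doing this uniformly over the finite set $\{v_1,\dots,v_r\}$ and taking the maximum of the resulting thresholds should yield a compact $C\subset \M_{m_2,m_1}$ outside of which $\l(f)(v_0^-(x))=0$ for all $x\in \M_{m_2,m_1}^o$. The delicate step is the combinatorial bookkeeping translating the Smith invariants into explicit growth/decay of $a(x)$ and $d(x)$, so that the vanishing can be read off compatibly with the relations in Lemma~2.1.
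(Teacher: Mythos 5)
Your overall skeleton matches the paper's strategy: reduce via the Smith/Cartan decomposition of $\M_{m_2,m_1}^o$ to diagonal representatives, move $v_0^-(x)$ into the Levi by an explicit decomposition (you via Iwasawa, the paper via the elementary identity $\left(\begin{smallmatrix}1&0\\x&1\end{smallmatrix}\right)=\left(\begin{smallmatrix}-x^{-1}&1\\0&x\end{smallmatrix}\right)\left(\begin{smallmatrix}0&1\\1&0\end{smallmatrix}\right)\left(\begin{smallmatrix}1&x^{-1}\\0&1\end{smallmatrix}\right)$ applied coordinate by coordinate), exploit the finiteness of $\{f(k):k\in K_n\}$, and use the $\mathrm{diag}(g,g)$-invariance of each $\l_i$ to convert the resulting Levi element into one of the form $\mathrm{diag}(h,1)$. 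Up to that point the plan is sound, and your ``all at once'' Iwasawa variant is a legitimate alternative to the paper's induction on the number of Smith invariants (the induction exists precisely to handle the bookkeeping you defer: the coordinates of $x$ whose Smith invariants are \emph{not} large cannot be absorbed by the congruence subgroup, and the paper peels them off one at a time using smoothness).

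The genuine gap is the final vanishing, which you yourself flag as ``the main obstacle'' and then justify only by analogy. The statement you need is exactly Proposition 4.1 of the paper (Lemma 3.1 of Friedberg--Jacquet): for a Shalika functional $L$ on $\rho$ and a fixed vector $v$, the function $g\mapsto L(\rho(\mathrm{diag}(g,1))v)$ is supported on the intersection of a \emph{compact subset of} $\M_m$ with $G_m$. Your proposed justification --- ``a consequence of the smoothness of $\pi_i$ and of $\l_i$ factoring through a suitable Jacquet-type quotient'' --- is not the right mechanism and does not yield the claim; the actual proof uses the $\Theta$-equivariance of $L$ under the unipotent radical $N$ of the Shalika subgroup: conjugating $n(y)$ with $y$ small (so that $\rho(n(y))v=v$) by $\mathrm{diag}(g,1)$ produces $n(gy)$, whence $\bigl(1-\theta(\mathrm{Tr}(gy))\bigr)L(\rho(\mathrm{diag}(g,1))v)=0$ for all such $y$, forcing $g$ into a bounded subset of $\M_m$ wherever the function is nonzero. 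Note also that this is a one-sided bound (bounded in $\M_m$, not compact in the ``Cartan direction''), which is what the argument needs; and the decay coming from $\d_P^{1/2}$ is irrelevant to the compact-support assertion --- a power of $|x|$ can never produce vanishing, so all the work is done by the Friedberg--Jacquet support property. Without stating and proving (or at least correctly invoking) that lemma, the proof does not close.
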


We explain right now why the assertion on the support in the lemma above implies the absolute convergence: as $\M_{m_2,m_1}\backslash \M_{m_2,m_1}^o$ is the set of $F$-points of a Zariski closed subset of $\M_{m_2,m_1}$, it has measure zero, and thus   
$$\phi(f)=\int_{x\in\M_{m_2,m_1}^o} \l(f)(v_0^-(x))dx=\int_{x\in \M_{m_2,m_1}^o \cap C} \l(f)(v_0^-(x))dx = 
\int_{x\in C} \l(f)(v_0^-(x))dx,$$ hence the convergence by smoothness of 
the map $x\mapsto \l(f)(v_0^-(x))$ on $\M_{m_2,m_1}$.\\ 

Thanks to Relation (\ref{1}) above, the linear form $\phi$ on $J$ is $\Theta$-equivariant under the groups $Q\cap N$, but it is also $\Theta$-equivariant under $V_0^-$ by definition (notice 
that $\Theta_{|V_0^-}$ is trivial), hence it is $\Theta$-equivariant under $N=V_0^-.Q\cap N$. 
We set $\chi=\d_{P'}\d^{1/2}=(\d_{P}^{1/2})_{|P'}$. We recall that the
space $\mathcal{C}_c^\infty(P'\backslash G',\chi)$ of smooth functions on $G'$ which staisfy (i) $f(hg) = \chi(h)f(g)$ for $h \in P'$ and
$g \in G'$ (and (ii) the support of $f$ is compact mod the left action of $P'$, which is automatic as $P'\backslash G'$ is compact), possesses a non-zero right $G'$-invariant linear form $d\mu_{P'\backslash G'}$. Relations (\ref{2}) and (\ref{3}) imply that $\phi$ is $\chi$-equivariant under $P'$. In particular, because of this equivariance under 
$P'$, because $P'\backslash G'$ is compact, and because $g\mapsto \phi(\rho(g)f)$ is smooth, the linear form $$\Phi:f\mapsto \int_{P'\backslash G'} \phi(\rho(g)f)d\mu_{P'\backslash G'}(g)$$ is well defined 
on $J$. It is $G'$-invariant by definition, and for $n\in N$, one has 
$$\Phi(\rho(n)f)=\int_{P'\backslash G'} \phi(\rho(gn)f)d\mu_{P'\backslash G'}(g)= \int_{P'\backslash G'} \Theta(n^g)\phi(\rho(g)f)d\mu_{P'\backslash G'}(g)=\Theta(n)\Phi(f)$$
because $\Theta(n^g)=\Theta(n)$ for any $g\in G'$. In particular 
$$\Phi\in \mathrm{Hom}_S(J,\Theta),$$ hence we will be done if we show that 
$\Phi$ is nonzero.\\

 We first remark that if 
$$U'^-=\{diag\left(\begin{pmatrix} I_{m_1} & \\ y & I_{m_2}\end{pmatrix} , 
\begin{pmatrix} I_{m_1} & \\ y & I_{m_2}\end{pmatrix}\right), \ y\in \M_{m_2,m_1}\}$$ is the opposite of the unipotent radical of $P'$, and $h$ is a map 
in the subspace of functions $J'_0$ of $J'=\mathcal{C}_c^\infty(P'\backslash G',\chi)$, the restriction of which to $U'^-$ has compact support, then 
$$\int_{P'\backslash G'} h(g)d\mu_{P'\backslash G'}(g)= \int_{U'^-} h(u)du.$$
Let $J_0$ be the subspace of functions in $J$, which restrict to $V^-$ with compact 
support, it is clear that if $f\in J_0$, then $g\in G'\mapsto \phi(\rho(g)f)$ belongs to $J'_0$, and one has 
$$\Phi(f)=\int_{x,y \in \M_{m_2,m_1}} \l(f)\left( \begin{pmatrix} I_{m_1} &  &  & \\ y & I_{m_2} & x & \\ &  & I_{m_1}& \\  & & y & I_{m_2} \end{pmatrix} \right)dx dy.$$
Now let $v_i$ be a vector in the space of $\pi_i$ with $\l_i(v_i)\neq 0$, both fixed under $K_n(r)$ for some $r>0$ large enough contained in the $Q$-parahoric subgroup $K_Q$ of $K_n$. 
Then for $v=v_1\otimes v_2$, there is a unique map $f\in J_0$, with support equal to 
$QK_n(r)$, such that $f(qk)=\tau(q)v$, for $q\in Q$ and 
$k \in K_n(r)$. For such an $f$, it follows that the integrand in $\Phi(f)$ has compact open support $K_f$ in $\mathcal{M}_{m_2 ,m_1}$, and $\Phi(f) = vol(K_f )\lambda(v)$, hence is nonzero. This ends the proof of Theorem \ref{main}.

\section{Proof of Lemma \ref{key}}

We will need the following property of Shalika functions (see Lemma 3.1. of \cite{FJ} or Lemma 4.1 of \cite{M-linearmirabolic} 
for the proof in the non archimedean case).

\begin{prop}\label{shalika-vanish}
Let $\rho$ be a representation of $G_{2m}$, and $L$ a Shalika functional on $\rho$, then for any 
$v$ in $V$, the map $g\mapsto L(\rho(diag(g,1))v)$ is supported on the intersection of a compact set of 
$\M_m$ with $G_m$.
\end{prop}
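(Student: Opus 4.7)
The plan is to use the smoothness of $v$ together with the Shalika equivariance of $L$ to force $g$ into a lattice of $\M_m$. Write $n(x) = \begin{pmatrix} I_m & x \\ & I_m \end{pmatrix}$ for $x \in \M_m$; by smoothness of $v$ in the space of $\rho$, there is a lattice $L_0 \subset \M_m$ such that $\rho(n(x))v = v$ for every $x \in L_0$.

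First I would exploit that the Shalika character $\Theta$ is trivial on the Levi part $\{diag(g,g)\}$ of the Shalika subgroup, so that
$$L(\rho(diag(g,1))v) = L(\rho(diag(g,g))\rho(diag(1,g^{-1}))v) = L(\rho(diag(1,g^{-1}))v).$$
This reduces the question to studying the behaviour of the function $g\mapsto L(\rho(diag(1,g^{-1}))v)$ on $G_m$.

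The key step is then the direct matrix identity $diag(1,g^{-1})\, n(x) = n(xg)\, diag(1,g^{-1})$, valid for any $x \in \M_m$ and $g \in G_m$. Combined with $\rho(n(x))v = v$ for $x \in L_0$ and the $\Theta$-equivariance of $L$ along the unipotent radical, this yields, for every $x \in L_0$,
$$L(\rho(diag(1,g^{-1}))v) = L(\rho(diag(1,g^{-1}))\rho(n(x))v) = L(\rho(n(xg))\rho(diag(1,g^{-1}))v) = \theta(tr(xg))\, L(\rho(diag(1,g^{-1}))v).$$
Hence $L(\rho(diag(g,1))v) \neq 0$ forces $\theta(tr(xg)) = 1$ for every $x \in L_0$, which confines $g$ to the $\o$-lattice in $\M_m$ dual to $L_0$ under the non-degenerate pairing $(x,g) \mapsto \theta(tr(xg))$ (non-degeneracy follows from the nontriviality of $\theta$ together with the fact that the trace form on $\M_m$ is non-degenerate).

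Since an $\o$-lattice in $\M_m$ is a compact subset, the support statement follows by taking $C$ to be this dual lattice. I expect no real obstacle: the commutation identity is a one-line computation, and the only minor point is recalling that $F$-lattices in finite-dimensional $F$-vector spaces are compact, which is what ensures the intersection with $G_m$ is contained in a compact set of $\M_m$ as required.
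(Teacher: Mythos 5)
Your proof is correct and is essentially the standard argument that the paper itself defers to (Lemma 3.1 of Friedberg--Jacquet, Lemma 4.1 of \cite{M-linearmirabolic}): smoothness under the unipotent radical of the Shalika subgroup plus $\Theta$-equivariance forces $\theta(\mathrm{tr}(xg))=1$ for all $x$ in a lattice, confining $g$ to the dual lattice. The preliminary reduction to $diag(1,g^{-1})$ via the Levi part of $S$ is harmless but not needed, since the same commutation trick applies directly to $diag(g,1)$.
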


In the proof of Lemma \ref{main}, we will use the representation $I$ rather than $J$, hence the group of matrices of the form 
$$\begin{pmatrix} I_{m_1} & & & \\ & I_{m_1} & & \\  & x & I_{m_2} & \\  &  & & I_{m_2} \end{pmatrix}$$ with $x\in 
\M_{m_2,m_1}$ instead of 
$V_0^-$. We now explain how to prove Lemma \ref{main} when $n_1=n_2=2$. This particular case contains the main steps of the general computation, 
but is easy to write. We use the familiar matrix relation valid for $x\neq 0$:
$$\begin{pmatrix} 1 & 0 \\ x & 1 \end{pmatrix}=
\begin{pmatrix} -x^{-1} & 1 \\ 0 & x \end{pmatrix}\begin{pmatrix} 0 & 1 \\ 1 & 0 \end{pmatrix}\begin{pmatrix} 1 & x^{-1} \\ 0 & 1 \end{pmatrix}.$$

Let $f$ belong to $I$, we choose $l$ large enough, such that $f$ is right invariant under $K_4(l)$. One has for $|x|\geq q^l$: 
$$\l(f)\left(\begin{pmatrix} 1 & & & \\ & 1 &  & \\ & x & 1 & \\ & & & 1 \end{pmatrix}\right)=
\l(f)\left(\begin{pmatrix} 1 & & & \\ & -x^{-1} & 1 & \\ &  & x & \\ & & & 1 \end{pmatrix}
\begin{pmatrix} 1 & & & \\ &  & 1 & \\ & 1 &  & \\ &  & & 1 \end{pmatrix}
\begin{pmatrix} 1 & & & \\ & 1 & x^{-1} & \\ &  & 1 & \\ & & & 1 \end{pmatrix}\right)$$
$$=|x|^{-2}\l(\pi_1(\mathrm{diag}(-x,1))\otimes \pi_2 (\mathrm{diag}(x,1))(f\begin{pmatrix} 1 & & & \\ &  & 1 & \\ & 1 &  & \\ &  & & 1 \end{pmatrix})).$$ 

Writing $f\begin{pmatrix} 1 & & & \\ &  & 1 & \\ & 1 &  & \\ &  & & 1 \end{pmatrix}$ as a sum 
$\sum_i v_1^i\otimes v_2^i,$ we obtain 
$$\l(f)\left(\begin{pmatrix} 1 & & & \\ & 1 &  & \\ & x & 1 & \\ & & & 1 \end{pmatrix}\right)
=|x|^{-2}\sum_i \l_1(\mathrm{diag}(-x,1)v_1^i) \l_2(\mathrm{diag}(x,1)v_2^i),$$ so there is $k\geq l$, such that this quantity 
is zero for $|x|> k$ according to Lemma \ref{shalika-vanish}. Hence the support of the map $x\mapsto \l(f)(v_0^-(x))$ is contained in $\{x\in F,|x|\leq q^k\}$, and Lemma \ref{main} is proved in this special case.\\

We now give the proof in general. Let us set $\Lambda^+(m)=\{a\in \Z^m, a_1\leq \dots \leq a_m\}$. For 
$(x_1,\dots,x_{m_2})\in F^{m_2}$, we write 
$$u(x_1,\dots,x_{m_2})=\begin{pmatrix}\mathrm{diag}(x_1,\dots,x_{m_2})| \mathbf{0}_{m_2,m_1-m_2} \end{pmatrix}\in \mathcal{M}_{m_2,m_1}$$ 
and 
$$v(x_1,\dots,x_{m_2})=\begin{pmatrix}\mathrm{diag}(x_1,\dots,x_{m_2})\\ \mathbf{0}_{m_1-m_2,m_2} \end{pmatrix}\in \mathcal{M}_{m_1,m_2}$$

We first recall a straightforward consequence of the matricial form of the structure theorem of finitely generated 
modules over a principal ideal domain.

\begin{prop}
One has: $$\mathcal{M}_{m_2,m_1}^o=\coprod_{a\in \Lambda^+(m_2)} K_{m_2}
 u(\w^{a_1},\dots,\w^{a_{m_2}}) K_{m_1}.$$
\end{prop}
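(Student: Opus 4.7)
The plan is to reduce the statement to the Smith normal form theorem for rectangular matrices over the principal ideal domain $\o$, as the sentence preceding the proposition already hints. Two things need to be checked: existence of a representative of the prescribed form, and disjointness of the double cosets indexed by $\Lambda^+(m_2)$.

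For existence, given $X\in \M_{m_2,m_1}^o$, I would first clear denominators by choosing an integer $N\geq 0$ large enough that $\w^N X\in \M_{m_2,m_1}(\o)$. Since $X$ has maximal rank $m_2$, so does $\w^N X$, so Smith normal form supplies $k_1\in K_{m_2}$, $k_2\in K_{m_1}$ and integers $0\leq b_1\leq \cdots \leq b_{m_2}$ with $k_1(\w^N X) k_2=u(\w^{b_1},\dots,\w^{b_{m_2}})$. Setting $a_i:=b_i-N$ yields $X=k_1^{-1} u(\w^{a_1},\dots,\w^{a_{m_2}}) k_2^{-1}$ with $a\in \Lambda^+(m_2)$, proving the union covers $\M_{m_2,m_1}^o$.

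For disjointness, my plan is to exhibit invariants of the double coset that recover $a$. For $Y\in \M_{m_2,m_1}^o$ and $1\leq i\leq m_2$, let $d_i(Y)$ denote the minimum among the valuations of the $i\times i$ minors of $Y$. By the Cauchy--Binet formula, $d_i$ is invariant under the left action of $K_{m_2}$ and the right action of $K_{m_1}$. On the other hand, for $Y=u(\w^{a_1},\dots,\w^{a_{m_2}})$ with $a\in \Lambda^+(m_2)$, a non-zero $i\times i$ minor of $Y$ can only come from choosing the same subset of size $i$ of $\{1,\dots,m_2\}$ both as a set of rows and as a set of columns, and it equals $\w^{a_{r_1}+\cdots+a_{r_i}}$ for some $1\leq r_1<\cdots<r_i\leq m_2$; the ordering of the $a_i$ then forces the minimum to be attained at $(r_1,\dots,r_i)=(1,\dots,i)$, whence $d_i(Y)=a_1+\cdots+a_i$. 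Thus the sequence $(d_i(X))_{1\leq i\leq m_2}$ recovers $a$ uniquely from its double coset, establishing disjointness.

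I do not expect a serious obstacle here: the statement is essentially the elementary divisor theorem for rectangular $\o$-matrices, extended to $F$-coefficients by premultiplication by a power of $\w$. The only step that requires slight attention is the combinatorial verification of the formula $d_i(u(\w^{a_1},\dots,\w^{a_{m_2}}))=a_1+\cdots+a_i$ on the diagonal representatives, which is straightforward from inspection of which submatrices yield non-zero minors.
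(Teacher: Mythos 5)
Your argument is correct and follows exactly the route the paper intends: the paper gives no proof at all, simply recording the proposition as a "straightforward consequence of the matricial form of the structure theorem of finitely generated modules over a principal ideal domain," which is precisely the Smith normal form reduction you carry out (clearing denominators by a power of $\w$, then absorbing units into $K_{m_2}$). Your disjointness argument via the valuations of $i\times i$ minors is the standard uniqueness half of the elementary divisor theorem and is sound.
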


We continue with the proof of Lemma \ref{key}. To lighten notations, we set $f^\l$ the map on $\M_{m_2,m_1}$ defined 
by $$f^\l(x)=\l(f)\begin{pmatrix} I_{m_1} & & & \\ & I_{m_1} & & \\ & x & I_{m_2}& \\ & & & I_{m_2} \end{pmatrix}.$$ For any $b\in \Z$, the set $$C(b)=\coprod_{b\leq a_1 \leq \dots \leq a_{m_1}} K_{m_2}
u(\w^{a_1},\dots,\w^{a_{m_2}}) K_{m_1}$$ is relatively compact 
in $\M_{m_2,m_1}$, for example it is contained in the compact set $\w^b\M_{m_2,m_1}(\o)$. We will show that $f^\l$ restricted to 
$\M_{m_2,m_1}^o$ vanishes outside $C(b)$ for some $b\in \Z$. 
First, we notice that 
$$f^\l(k_2xk_1)=  \l(f)\left( \begin{pmatrix} I_{m_1} & & & \\ & I_{m_1} & & \\ &  & k_2& \\ & & & I_{m_2} \end{pmatrix}
\begin{pmatrix} I_{m_1} & & & \\ & I_{m_1} & & \\ & x & I_{m_2}& \\ & & & I_{m_2} \end{pmatrix}
\begin{pmatrix} I_{m_1} & & & \\ & k_1 & & \\ &  & I_{m_2}& \\ & & & I_{m_2} \end{pmatrix}\right),$$ 
is equal to $$\l(f)\left( \begin{pmatrix} I_{m_1} & & & \\ & I_{m_1} & & \\ &  & I_{m_2} & \\ & & & k_2^{-1} \end{pmatrix}
\begin{pmatrix} I_{m_1} & & & \\ & I_{m_1} & & \\ & x & I_{m_2}& \\ & & & I_{m_2} \end{pmatrix}
\begin{pmatrix} I_{m_1} & & & \\ & k_1 & & \\ &  & I_{m_1}& \\ & & & I_{m_2} \end{pmatrix}\right)$$ because of left equivariance 
property of $f$, and the invariance of $\l$ under $\mathrm{diag}(g,g,h,h)$ for $(g,h)\in G_{m_1}\times G_{m_2}$. So we obtain  
$$f^\l(k_2xk_1)=(\rho(k)f)^\l(x),$$ for $k=\mathrm{diag}(I_{m_1}, k_1, I_{m_2},k_2^{-1})$. As $f$ is fixed by some open subgroup of $G_n$ 
on the right, there are only a finite number of maps $\rho(k)f$ when $k$ varies in $K_n$, hence 
it suffices to prove that for any $f$ in $I$, there is $b\in \Z$, such that the
 map $$(a_1,\dots,a_{m_2})\mapsto f^\l (u(\w^{a_1},\dots,\w^{a_{m_2}}))$$ from the set 
$\Lambda^+(m_2)$ to $\C$, vanishes whenever $a_1<b$. We will in fact prove by induction that for $r$ between 
$1$ and $m_2$, there is $b_r$ such that the map 
$$(a_1,\dots,a_r)\mapsto f^\l (u(\w^{a_1},\dots,\w^{a_r},0,\dots,0))$$ from $\Lambda^+(r)$ to $\C$, vanishes whenever $a_1<b_r$.\\

We only do the induction step, as the case $r=1$ is up to notational changes the particular case we treated 
before the proof. Take $l$ such that $f$ is fixed under right translation by $K_n(l)$. In particular if $a_r\leq -l$, the matrix 
$$k'=\begin{pmatrix} I_{m_1} & & & \\ & I_{m_1} & v(\w^{-a_1},\dots,\w^{-a_r},0,\dots,0) & \\ &  & I_{m_2} & \\ & & & I_{m_2} \end{pmatrix}$$fixes $f$ on the right, and denoting by $W_r$ the matrix 
$$W_r=\begin{pmatrix} I_{m_1} & & & \\ & \mathrm{diag}(\0_{r},I_{m_1-r}) & v(1,\dots,1,0,\dots,0) & \\ & u(1,\dots,1,0,\dots,0) & \mathrm{diag}(\0_{r},I_{m_2-r}) & \\ & & & I_{m_2} \end{pmatrix},$$ one has 

$$f^\l (u(\w^{a_1},\dots,\w^{a_r},0,\dots,0))=$$
$$\l(f)\left( \begin{pmatrix} I_{m_1} & & & \\ 
& \mathrm{diag}(-\w^{-a_1},\dots,-\w^{-a_r},1,\dots,1)& v(1,\dots,1,0,\dots,0) & \\
 &  & \mathrm{diag}(\w^{a_1},\dots,\w^{a_r},1,\dots,1) & \\ & & & I_{m_2} \end{pmatrix}W_r k' \right)$$
$$=\l(f)\left( \begin{pmatrix} \mathrm{diag}(\w^{a_1},\dots,\w^{a_r},1,\dots,1) & & & \\ & I_{m_1} & v(1,\dots,1,0,\dots,0)& \\ &  & 
\mathrm{diag}(\w^{a_1},\dots,\w^{a_r},1,\dots,1) & \\ & & & I_{m_2} \end{pmatrix}W_r\right).$$

Write $f(W_r)=\sum_i v_1^i\otimes v_2^i$, then $$q^{m(a_1+\dots+a_r)}f^\l (u(\w^{a_1},\dots,\w^{a_r},0,\dots,0))$$
$$=\sum_i \l_1(\pi_1(\mathrm{diag}(\w^{a_1},\dots,\w^{a_r},1,\dots,1, I_{m_1}))v_1^i)
\l_2(\pi_1(\mathrm{diag}(\w^{a_1},\dots,\w^{a_r},1,\dots,1, I_{m_2}))v_2^i).$$

For $y$ in $F$, we introduce the notation 
\[f_y=\rho\begin{pmatrix}\begin{pmatrix} I_{m_1} & & & \\   & I_{m_1} & & \\ &u(\underbrace{0,\dots,0}_{r-1},y,0,\dots,0) & I_{m_2} & \\ & & & I_{m_2} \end{pmatrix}W_r\end{pmatrix} f,\]
hence we have the relation
\[f^\lambda(u(x_1 ,\dots,x_r ,0,\dots,0)) = f^\lambda_{x_r} (x_1 ,\dots,x_{r-1} ,0,\dots,0)).\]

Now by smoothness of $f$, there is a finite number $x_{r,1},\dots,x_{r,d}$ of elements in 
$\w^{k}\o$, for any $(x_1,\dots,x_r)\in F^{r-1}\times \w^{k}\o$, then $$f^\l(u(x_1,\dots,x_r,0,\dots,0))= 
f^\lambda_{x_{r,i}} (x_1 ,\dots,x_{r-1} ,0,\dots,0))$$ for some $i$ between $1$ and $d$. The induction 
hypothesis gives integers $b_{r-1,i}$ such that $$f_{x_{r,i}}^\l(u(\w_1^{a_1},\dots,\w_{r-1}^{a_{r-1}},0,0,\dots,0))$$ 
vanishes for any $(a_1,\dots,a_{r-1})\in \Lambda^+(r-1)$ such that $a_1<b_{r-1,i}$. It suffices to take 
$$b_r=min(b_{r-1,1},\dots,b_{r-1,d}),$$ to obtain that $f^\l(u(\w^{a_1},\dots,\w^{a_r},0,\dots,0))$ vanishes whenever $a_1<b_r$.

\section{Generic representations with a Shalika model}

This last section is devoted to the proof of Corollary \ref{cor}. We will use linear periods.

\begin{df}
Let $n=2m$ be a positive even integer, and let $H_n$ be the Levi subgroup of $G_n$ given by matrices 
$\mathrm{diag}(g_1,g_2)$, with $g_i\in G_m$. One says that a smooth representation $\pi$ admits a linear period 
if the space $\mathrm{Hom}_{H_n}(\pi,\C)$ is nonzero. We call a linear form in $\mathrm{Hom}_{H_n}(\pi,\C)-\{0\}$ a linear period.
\end{df}

We recall Proposition 3.1 of \cite{FJ}, which applies to any irreducible (and in fact finite length) representation of $G_n$ 
thanks to Lemma 6.1 of \cite{JR}, as explained in the remark after the aforementioned lemma in [ibid.].

\begin{prop}\label{shalika-implies-linear}
Let $n$ be a positive even integer, and $\pi$ an irreducible smooth representation of $G_n$ with a Shalika functional, then it has a linear period. 
\end{prop}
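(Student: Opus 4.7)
The plan is to construct an $H_n$-invariant linear form on $\pi$ by averaging a Shalika functional $\Lambda$ on $\pi$ over the coset space $G'\backslash H_n$, where $G' = S\cap H_n$ is the diagonal copy of $G_m$. Using the cross-section $g\mapsto \mathrm{diag}(g,I_m)\colon G_m\to H_n$ (on which $H_n$ acts from the right by $(h_1,h_2)\colon g\mapsto h_2^{-1}gh_1$, so that Haar measure $dg$ is invariant by unimodularity of $G_m$), the natural candidate is
\[
\varphi(v) \;=\; \int_{G_m}\Lambda\bigl(\pi(\mathrm{diag}(g,I_m))v\bigr)\,dg.
\]
Whenever absolutely convergent, $\varphi$ is $H_n$-invariant: for $(h_1,h_2)\in H_n$ one writes $\mathrm{diag}(gh_1,h_2) = \mathrm{diag}(h_2,h_2)\cdot\mathrm{diag}(h_2^{-1}gh_1,I_m)$, uses that $\mathrm{diag}(h_2,h_2)\in G'\subset S$ lies in the kernel of $\Theta$, and substitutes $g' = h_2^{-1}gh_1$.

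The serious content is therefore the convergence and non-vanishing of $\varphi$. Proposition \ref{shalika-vanish} is exactly what is needed: the integrand $g\mapsto \Lambda(\pi(\mathrm{diag}(g,I_m))v)$ vanishes outside $K\cap G_m$ for some compact $K\subset \mathcal{M}_m$. This already confines any possible divergence to the locus where $|\det g|\to 0$, where the Haar measure on $G_m$ differs from the restricted additive measure by the unbounded factor $|\det g|^{-m}$. For discrete series this locus causes no trouble because matrix coefficients decay rapidly enough there, so the integral converges outright and yields a nonzero $H_n$-invariant functional after testing against a vector whose Shalika function is concentrated near the identity.

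For a general irreducible $\pi$, I would introduce the complex deformation
\[
\varphi(v,s) \;=\; \int_{G_m} \Lambda\bigl(\pi(\mathrm{diag}(g,I_m))v\bigr)\,|\det g|^s\,dg,
\]
which is absolutely convergent for $\mathrm{Re}(s)\gg 0$ by the support bound together with smoothness of the integrand, transforms under $H_n$ by the character $(h_1,h_2)\mapsto |\det h_1|^{-s}|\det h_2|^{s}$ (trivial at $s=0$), and extends meromorphically in $s$ by Bernstein's rationality principle for $p$-adic integrals with parameter. The main obstacle is then to prove that $\varphi(\cdot,s)$ is holomorphic and not identically zero at $s=0$: this is precisely the role of Lemma 6.1 of \cite{JR}, which reduces from an irreducible to a finite length setting and supplies the machinery to extract a regular value from the Laurent expansion at $s=0$. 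Non-vanishing at $s=0$ is then obtained by specialising to vectors $v$ whose Shalika function $g\mapsto \Lambda(\pi(\mathrm{diag}(g,I_m))v)$ is concentrated in an arbitrarily small neighbourhood of the identity in $G_m$, forcing $\varphi(v,0)$ to be a nonzero volume multiple of $\Lambda(v)$; the resulting value defines the desired linear period.
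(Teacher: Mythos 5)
The paper does not actually prove this proposition: it simply recalls Proposition 3.1 of \cite{FJ}, extended to all irreducible (indeed finite length) representations by Lemma 6.1 of \cite{JR}. What you have written is a reconstruction of that cited argument --- the deformed integral $\int_{G_m}\Lambda(\pi(\mathrm{diag}(g,I_m))v)|\det g|^s\,dg$, convergence for $\mathrm{Re}(s)\gg 0$ from the support bound of Proposition \ref{shalika-vanish}, rational continuation, and extraction of an $H_n$-invariant form at the point where the twisting character becomes trivial --- so in spirit you are following exactly the route the paper points to. Your equivariance computation and the identification of the character $|\det h_1|^{-s}|\det h_2|^{s}$ are correct.

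Two places in your sketch are thinner than they read. First, the existence of vectors whose Shalika function $g\mapsto\Lambda(\pi(\mathrm{diag}(g,I_m))v)$ is concentrated near the identity is not automatic and is the real engine of the non-vanishing: it is obtained by replacing $v$ with $\int_{N_0}\Theta^{-1}(n)\pi(n)v\,dn$ for a large compact open subgroup $N_0$ of the unipotent radical $N$ of the Shalika subgroup; since $\mathrm{diag}(g,I_m)$ conjugates $n(x)$ to $n(gx)$, this multiplies the Shalika function by $\int_{N_0}\theta(\mathrm{Tr}((g-I_m)x))\,dx$, which cuts the support down to $g$ close to $I_m$ and simultaneously shows that for such vectors the integral is a nonzero constant multiple of $\Lambda(v)$, independent of $s$. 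Second, you should not assert that $\varphi(\cdot,s)$ is holomorphic at $s=0$ and then evaluate there; the correct device is to take the leading coefficient of the Laurent expansion at $s=0$ of the whole family $\{\varphi(v,s)\}_v$, which is automatically nonzero and $H_n$-invariant (the equivariance character degenerates to the trivial one in the limit). The concentrated vectors above guarantee this leading coefficient is a nonzero functional. Finally, the aside about discrete series conflates Shalika functions with matrix coefficients and plays no role; and Lemma 6.1 of \cite{JR} is what extends the convergence and rationality statements beyond the unitary case treated in \cite{FJ}, rather than supplying the Laurent-expansion mechanism itself.
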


We recall a consequence of Theorem 5.1. of \cite{M-linearmirabolic}, which is explained just before its statement 
in [ibid.]. 

\begin{prop}\label{equiv}
Let $n$ be a positive even integer, and $\D$ be discrete series representation of $G_n$ with a linear period, then it has a Shalika functional.
\end{prop}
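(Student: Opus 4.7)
The plan is to characterise the existence of a linear period on a discrete series in terms of the pole behaviour of the Jacquet--Shalika exterior square $L$-factor at $s=0$, and then to match this characterisation with Kewat's analogous characterisation of Shalika functionals from \cite{K}. More precisely, Kewat shows that a discrete series $\D$ of $G_n$ admits a Shalika functional if and only if $L(s,\D,\Lambda^2)$ has a pole at $s=0$, so the real content is to show that the same $L$-factor has a pole at $s=0$ as soon as $\D$ admits a linear period.

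The first step is to set up an integral representation for linear periods. Given $\l \in \mathrm{Hom}_{H_n}(\D,\C)$ and a smooth vector $v$ in the space of $\D$, one defines a Flicker--Rallis style local zeta integral
\[Z(s,v,\l)=\int_{H_n\cap P_n\backslash P_n}\l(\D(g)v)\,|\det(g)|^{s}\,dg\]
on the mirabolic subgroup $P_n$ of $G_n$, and studies its absolute convergence for $\mathrm{Re}(s)$ large, meromorphic continuation in $s$, and functional equation. The greatest common denominator of this family, as $v$ varies, defines a local $L$-factor $L(s,\D,\l)$, which by a Bernstein--Zelevinsky filtration argument on $P_n$ divides $L(s,\D,\Lambda^2)$.

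Next, using the Zelevinsky classification one writes $\D$ as the segment representation attached to a supercuspidal $\rho$ of $G_d$ with $dk=n$, and computes $L(s,\D,\Lambda^2)$ explicitly in terms of the Langlands parameter of $\rho$. A case-by-case analysis identifies when this $L$-factor has a pole at $s=0$ (which forces self-duality of $\D$ together with a parity condition on $k$), and in each such case one produces a linear period by taking a residue at $s=0$ of $Z(s,\cdot,\l)$. Combined with the divisibility above, this yields the equivalence between linear periods and a pole of $L(s,\D,\Lambda^2)$ at $s=0$, and Kewat's theorem then delivers the Shalika functional.

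The main obstacle is proving that the residue functional obtained this way is genuinely $H_n$-invariant rather than merely $H_n\cap P_n$-invariant. Extending the invariance from the mirabolic to the full Levi is where the discrete series hypothesis really enters: one exploits the square-integrability of matrix coefficients together with a careful analysis of the top Bernstein--Zelevinsky derivative of $\D$ to kill the obstruction coming from the central direction of $H_n$ outside $P_n$, and it is precisely for discrete series that these cancellations go through, so the argument does not extend to arbitrary irreducible representations.
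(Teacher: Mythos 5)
First, note that the paper itself offers no proof of this proposition: it is imported wholesale as ``a consequence of Theorem 5.1 of \cite{M-linearmirabolic}'', where (in that reference) the argument runs through a comparison of linear and Shalika functionals on the mirabolic subgroup via the theory of Bernstein--Zelevinsky derivatives, combined with square-integrability to pass to the full Shalika group. Your plan --- reduce to showing that a linear period forces a pole of $L(s,\D,\Lambda^2)$ at $s=0$ and then invoke Kewat's theorem for square-integrable representations --- is a legitimate alternative in outline (Kewat's equivalence from \cite{K} is indeed the right second half), but the first half, which is where all the content lies, does not close as written.

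Concretely, three things are missing or misdirected. (i) The divisibility $L(s,\D,\l)\mid L(s,\D,\Lambda^2)$ is asserted via ``a Bernstein--Zelevinsky filtration argument'', but relating linear-period zeta integrals to the Jacquet--Shalika exterior square factor is precisely the local Bump--Friedberg problem treated at length in \cite{M-localBF}; moreover the natural comparison there involves the product $L(s_1,\D)L(s_2,\D,\Lambda^2)$, so even granting it you would still have to rule out that the pole detected by the period comes from the standard $L$-factor rather than the exterior square one. (ii) Nothing in the sketch shows that some $Z(s,v,\l)$ actually acquires a pole at $s=0$ when $\l\neq 0$ --- neither convergence of the integral for $\mathrm{Re}(s)$ large nor the ``exceptional pole'' mechanism (absence of a pole would make $v\mapsto Z(0,v,\l)$ invariant under too large a group) is supplied, and this is exactly the step where the discrete-series hypothesis must enter. (iii) The middle of the argument drifts into the converse direction: you propose to ``produce a linear period by taking a residue at $s=0$'' even though $\l$ is the given datum and the object to be produced is a Shalika functional; correspondingly, the ``main obstacle'' you single out at the end --- upgrading $H_n\cap P_n$-invariance of a residue functional to $H_n$-invariance --- belongs to that converse implication, not to the one being proved. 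So the proposal is a plausible programme, but the key implication ``linear period implies pole at $s=0$'' is assumed rather than established.
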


We also recall Proposition 3.8. of \cite{M-localBF}. As its proof contains inaccuracies, we use this 
occasion to correct them.

\begin{prop} \label{sym}
Let $m$ be a positive integer, and $\D$ be a unitary discrete series representation of $G_m$, then $\D\nu^s\times\D^\vee\nu^{-s}$ has 
a Shalika functional for all $s\in \C$.
\end{prop}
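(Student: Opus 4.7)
The plan is to construct a Shalika functional on $\pi_s := \D\nu^s\times\D^\vee\nu^{-s}$ by an integral built from the canonical pairing $\ell:\D\otimes\D^\vee\to\C$, using a Mackey/open-orbit analysis for the action of the Shalika subgroup $S=NG'$ on $G_{2m}/P$, where $P$ is the standard $(m,m)$ parabolic of $G_{2m}$ with unipotent radical $N$ (as in Section 2).

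The pivotal algebraic observation is the following: if $\sigma_s := \D\nu^s\otimes \D^\vee\nu^{-s}$ is the inducing datum on the Levi $M\cong G_m\times G_m$, then both the twist $\nu^s\otimes \nu^{-s}$ and the modulus $\delta_P^{1/2}$ restrict trivially to the diagonal subgroup $G'=\{\mathrm{diag}(g,g):g\in G_m\}\subset M$. Consequently, $\sigma_s\cdot\delta_P^{1/2}$ restricted to $G'$ is simply $\D\otimes\D^\vee$, and the canonical pairing $\ell$ furnishes a nonzero element of $\mathrm{Hom}_{G'}(\sigma_s\cdot\delta_P^{1/2},\1)$ which is \emph{the same for every} $s\in\C$. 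A Bruhat-decomposition calculation shows that the Weyl element $w_0=\begin{pmatrix} I_m & 0 \\ I_m & I_m\end{pmatrix}$ realises a unique open $S$-$P$ double coset $Sw_0P\subset G_{2m}$, with $w_0$ commuting with $G'$ and $S\cap w_0Pw_0^{-1}=G'$. Since $\Theta$ is trivial on $G'$, Frobenius reciprocity for the compactly induced representation attached to this orbit identifies its contribution to $\mathrm{Hom}_S(\pi_s,\Theta)$ with $\mathrm{Hom}_{G'}(\sigma_s\cdot\delta_P^{1/2},\1)$, which is nonzero. I would then promote this to an explicit formula on the dense subspace of $f\in\pi_s$ supported in $Sw_0P$ of the form
$$\lambda_s(f) = \int \ell\bigl(f(\eta\,w_0)\bigr)\,\Theta^{-1}(\eta)\,d\eta,$$
with $\eta$ ranging over an appropriate transversal slice to $w_0P$ in $Sw_0P$; the $G'$-invariance follows from the canonical pairing's equivariance together with conjugation-invariance of the Haar measure, and the $N$-equivariance with $\Theta$ is verified by a change of variable after commuting unipotent elements past $w_0$, in direct parallel with Relations (\ref{1})--(\ref{3}) of Section 3.

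The decisive technical point, and plausibly the origin of the inaccuracies being corrected, is the extension of $\lambda_s$ to \emph{all} of $\pi_s$ through absolute convergence of the defining integral, together with non-vanishing for every $s\in\C$. This is directly parallel to Lemma \ref{key} of the present paper: applying Proposition \ref{shalika-vanish} componentwise to the matrix coefficients $\langle \D(g)v,v^\vee\rangle$ of the unitary discrete series $\D$, and running the Cartan-decomposition argument of Section 4, one shows that the integrand vanishes outside a compact subset of the slice. Non-vanishing is secured by the test-vector argument used at the end of Section 3: pick $f$ supported in a small neighbourhood of $w_0P$ with $f(w_0)=v\otimes v^\vee$ and $\langle v,v^\vee\rangle\neq 0$, and verify that $\lambda_s(f)$ collapses to a nonzero volume integral. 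Uniformity in $s$ is automatic because the only $s$-dependence enters through the bounded twist $\nu^s\otimes\nu^{-s}$, which is trivial on $G'$; neither convergence nor non-vanishing is disturbed as $s$ varies, which gives the proposition for all $s\in\C$.
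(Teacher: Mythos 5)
Your overall strategy---an integral over the open $S$-orbit on $P\backslash G_{2m}$ pairing the section against the canonical form $\ell$ on $\D\otimes\D^\vee$, with the observation that $\nu^s\otimes\nu^{-s}$ and $\delta_P^{1/2}$ die on the diagonal $G'$---is essentially the construction of Friedberg--Jacquet and of Proposition 3.10 of \cite{M-localBF}, which is exactly what the paper's proof takes as its starting point. But the paper's proof exists precisely to repair the analytic step that your proposal gets wrong. The claim that ``the integrand vanishes outside a compact subset of the slice'' is false, and Proposition \ref{shalika-vanish} cannot be invoked to prove it: that proposition is a statement about functions $g\mapsto L(\rho(\mathrm{diag}(g,1))v)$ attached to an \emph{already existing} Shalika functional $L$, whereas here the only functional in hand is the pairing $\ell$, whose associated functions on the slice are built from matrix coefficients $\langle\D(g)v,v^\vee\rangle$ of a discrete series --- square-integrable, bounded (in the unitary case), but not compactly supported. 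Consequently the defining integral does not converge on all of $\pi_s$ for all $s$; it converges absolutely only in a right half-plane in $s$. Your final assertion that ``neither convergence nor non-vanishing is disturbed as $s$ varies'' because $\nu^s\otimes\nu^{-s}$ is trivial on $G'$ is also incorrect: the integration is over a slice transverse to $w_0P$, and evaluating $f$ at points of that slice produces Levi components that are \emph{not} diagonal, so the twist $\nu^s\otimes\nu^{-s}$ (which is unbounded for $\mathrm{Re}(s)\neq 0$) genuinely enters the integrand and governs convergence.

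Relatedly, your Frobenius reciprocity step only identifies $\mathrm{Hom}_{G'}(\sigma_s\delta_P^{1/2},\1)$ with $S$-equivariant functionals on the $S$-\emph{submodule} of sections supported on the open orbit $Sw_0P$; a nonzero functional on a submodule does not automatically extend to $\pi_s$, and that extension is exactly where the convergence problem lives --- it is not a formality. The paper's proof handles this by proving that $L(f_r)$ is bounded on $G_n$ for $r=-m$ (via the Iwasawa decomposition and boundedness of unitary discrete series coefficients), which gives absolute convergence of $\l_s(f_s)$ for $\mathrm{Re}(s)\geq\max((m-1)/2,\,m/2+r)$, and then relies on the continuation/rationality argument of \cite{M-localBF} to reach all $s\in\C$. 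Your proposal contains neither the boundedness estimate nor any mechanism (rational continuation in $q^{-s}$, a functional equation, or otherwise) for leaving the half-plane of convergence, so as written it does not establish the proposition.
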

\begin{proof}
We rectify the argument in the proof of Proposition 3.8. of \cite{M-localBF} which was alluded to above. Let $\eta_s$ be the map on $G_{2m}$, defined by the relation
\[\eta_s (\mathrm{diag}(g_1 ,g_2 )uk)= \nu(g_1)^s\nu(g_2)^{-s} \]
for $g_i \in G_m$, $u$ in the standard unipotent subgroup of $G_{2m}$ of type $(m,m)$, and $k \in K_{2m}$. For
$f \in \Pi = \Pi_0$, we set 
$f_s = \eta_s f \in \Pi_s$. We denote by $L$ the linear form \[L : v\otimes v^\vee\mapsto v^\vee(v)\]
on $\Delta \otimes \Delta^\vee$.
It is claimed that the map $L(f_0):g\mapsto L(f_0)(g)$ is bounded on $G_n$ 
because it is a "coefficient" of the unitary representation $\Pi=\D\times\D^\vee$. This hints that the linear form on $\Pi$ defined by $f\mapsto L(f(I_n))$ is smooth, which is not clear. In fact, a closer look at the proof shows that it is enough to find $r\in \R$ such that $L(f_r)$ is bounded:  in this case, the integral $\l_s (f_s )$ in the
proof of Proposition 3.10. in \cite{M-localBF} will be defined by an absolutely convergent integral for
$Re(s)\geq \mathrm{max}((m-1)/2,m/2 +r)$) (instead of $Re(s)\geq m/2$ as claimed in the erroneous proof),
and this won't change the conclusion.\\
We will prove here that we can take $r=-m$. Let $U$ be a compact open subgroup of $K_{2m}$ fixing $f_{-m}$ on the left, and $k_1,\dots,k_t$ be representatives of $K_{2m}/U$. For $p=\begin{pmatrix} a & b \\ & c\end{pmatrix}$, with $a,\ c\in G_m$, and $b\in \M_m$, and $k\in K_{2m}$, there exists  $i \ (1\leq i \leq t)$ such that 
$$L(f_{-m})(pk)=L(f_{-m}(pk_i))=L(\D(a)\otimes \D^\vee(b) f_{-m}(k_i)).$$ By definition of $L$, if we write 
$f_{-m}(k_i)=\sum_j v_{i,j} \otimes v^\vee_{i,j}$, one has $$L(\D(a)\otimes \D^\vee(b) f_{-m}(k_i))
= \sum_j <\D(b^{-1}a)v_{i,j},v^\vee_{i,j}>.$$
As $\D$ is unitary, its matrix coefficients are bounded on $G_m$, hence the map 
$$(a,b)\mapsto L(\D(a)\otimes \D^\vee(b) f_{-m}(k_i))$$ is bounded on $G_m\times G_m$, and $L(f_{-m})$ is bounded on $G_n$ 
thanks to the Iwasawa decomposition.
\end{proof}

Finally, we recall Theorem 3.1. of \cite{M-localBF}.

\begin{thm}\label{linear-generic}
Let $n$ be a positive even integer, and $\pi$ be a generic representation of $GL(n, F )$.
The representation $\pi$ admits a linear  period if and only if it is obtained as a normalised
parabolic induction
\[(\D_1\times \D_1^\vee)\times \dots \times (\D_s\times \D_s^\vee)\times \D_{s+1}\times \dots \times \D_t , \ (0 \leq s \leq  t)\]
where each $\D_i$ is a discrete series, which moreover admits a linear period for $i>s$. 
\end{thm}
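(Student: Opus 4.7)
The theorem is a classification of generic representations admitting a linear period, so the plan is to prove both directions of the biconditional separately.

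For the sufficiency direction, the strategy relies on two auxiliary statements: first, that $\D\times\D^\vee$ admits a linear period for any discrete series $\D$; and second, that linear periods are preserved under parabolic induction. The first is a classical open-orbit construction: one identifies the unique open orbit of the relevant linear subgroup on the flag variety of the parabolic supporting $\D\boxtimes\D^\vee$, and builds the period using the canonical pairing $\D\otimes\D^\vee\to\C$. The second is the linear-period analogue of Theorem \ref{main}, proved by essentially the same machinery as in Section 3: one builds the period on the induced representation as an integral $\Phi(f)=\int\phi(\rho(g)f)\,dg$ over an appropriate quotient inside the linear subgroup, with $\phi$ an equivariant form extending linear periods on the inducing data, and convergence established via a linear-period analogue of Lemma \ref{key}. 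Together with the hypothesis that each $\D_i$ for $i>s$ admits a linear period, repeated application of the stability result assembles a linear period on $\pi$.

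For the necessity direction, the plan is to use the Zelevinsky classification to write $\pi=\D_1\times\cdots\times\D_t$ and then analyze $\mathrm{Hom}_{H_n}(\pi,\C)$ via the Bernstein--Zelevinsky geometric lemma. The double-coset decomposition $P\backslash G/H_n$ (where $P$ is the parabolic supporting the above induction) induces a filtration of $\pi|_{H_n}$, and any non-zero linear period must be non-trivial on at least one layer. Each layer corresponds to a combinatorial matching of the factors $\D_i$: matched pairs $(\D_i,\D_j)$ contribute through a Schur-type pairing precisely when $\D_j\simeq\D_i^\vee$, and unmatched $\D_i$'s contribute only when they are themselves linearly distinguished. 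A twist-by-unramified-character argument, varying each $\D_i$ independently, moves $\pi$ to a generic point where only the ``clean'' matchings can contribute; combined with multiplicity-one for linear periods on irreducible distinguished representations, this forces the Zelevinsky data of $\pi$ into the prescribed form.

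The main obstacle is the necessity direction, specifically controlling contributions from non-open double cosets. Ruling them out requires a hereditary distinction analysis, showing that distinction of a standard module propagates only to its irreducible quotient in a way compatible with the expected matching. Linked segments and non-trivial reducibility points are a potential source of difficulty, and must be handled either by showing that the relevant twisted Jacquet modules fail to be distinguished, or by invoking the fact that for generic representations the Zelevinsky standard module coincides with its irreducible quotient, so that no pathological contributions appear.
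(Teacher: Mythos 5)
The first thing to note is that the paper does not prove this statement at all: it is recalled verbatim as Theorem 3.1 of \cite{M-localBF} and used as a black-box input to Corollary \ref{cor}. So there is no internal proof to compare your proposal against; the only honest comparison is with the cited reference, and with the fragments of the machinery that the present paper does develop (Theorem \ref{main}, Lemma \ref{key}, Proposition \ref{sym}), all of which concern the Shalika side rather than the linear side.

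Your two-pronged outline does track the general shape of the argument in \cite{M-localBF}: sufficiency via (i) distinction of $\D\times\D^\vee$ and (ii) a hereditary property of linear periods under parabolic induction, and necessity via a Mackey-theoretic analysis of the double cosets $P\backslash G/H_n$. But as written it is a plan, not a proof, and it is thinnest exactly where the real work lies. For sufficiency, the ``linear-period analogue of Theorem \ref{main}'' cannot simply be invoked: the convergence statement (the analogue of Lemma \ref{key}) is the entire technical content of Sections 3--4 in the Shalika case, its linear counterpart must be established separately, and Proposition \ref{sym} of this very paper exists precisely to repair a convergence error in the corresponding argument of \cite{M-localBF} --- so these analytic points are genuinely delicate, not routine. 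For necessity, you correctly identify that the problem is to kill contributions from non-open double cosets, but you then offer two alternative escape routes without committing to either, and the second one (that for generic $\pi$ the standard module is irreducible) does not do the job: irreducibility of $\pi$ does not prevent some unwanted layer of the Mackey filtration of $\pi|_{H_n}$ from carrying an invariant functional. What is actually needed is the inductive input on distinguished derivatives/discrete series from \cite{M-linearmirabolic}, a combinatorial argument on segments, and a justification that the ``deform by unramified twists and specialize'' device preserves distinction --- none of which is supplied. In short, the skeleton is right but both load-bearing steps are named rather than proved, which is consistent with the paper's own decision to import the theorem rather than reprove it.
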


We are now in the position to prove Corollary \ref{cor}. Indeed, as by Proposition \ref{equiv}, a discrete series representation of 
$G_n$ has a Shalika functional if and only if it has a linear period, Theorem\ref{linear-generic} shows that the representations 
decribed in the statement of Corollary \ref{cor} are exactly the generic representations of $G_n$ with a linear period. Thanks to Proposition \ref{shalika-implies-linear}, it is thus enough to show that these representations admit a Shalika functional. But they can be written as a (commutative) product of discrete series with a Shalika functional, and of representations of the form 
$\D\times \D^\vee$, for $\D$ a discrete series, and such representations also admit a Shalika functional according to Proposition 
\ref{sym}. The statement of Corollary \ref{cor} now 
follows from Theorem \ref{main}.

\end{document}